\title{The Lamplighter Group is Not Semistable at Infinity}
\author{Michael Mihalik}
\newtheorem{theorem}{Theorem}[section]
\newtheorem{lemma}[theorem]{Lemma}
\newtheorem{remark}[theorem]{Remark}
\newcounter{claimnum}
\newcounter{definitionnum}
\newenvironment{proof}{\addvspace{12pt}\noindent{\bf Proof:}}{
$\Box$\par\addvspace{12pt}}
\date{\today}
\begin{document}
\maketitle

\begin{abstract}
The question of whether or not all finitely presented groups are semistable at infinity has been studied for over 40 years. In 1986,  
we defined what it means for a finitely generated group to be semistable at infinity - in analogy with the definition for finitely presented groups. At that time
we suggest that the Lamplighter group may not be semistable at $\infty$, but until now there was no confirmed example of a finitely generated group that is not semistable at $\infty$. We prove the Lamplighter group is not semistable at $\infty$. Finitely generated non-semistable groups may be important in finding non-semistable finitely presented groups via ascending HNN extensions. There is an ascending HNN extension $E$ of the Lamplighter group (called the Extended Lamplighter group) that is finitely presented. It would seem that $E$ is a candidate to be a finitely presented non-semistable at $\infty$ group, but a result of N. Silkin 
 shows that $E$ is in fact simply connected at $\infty$.
\end{abstract}

\section {Introduction}\label{Intro}

The question of whether or not all finitely presented groups are semistable at infinity has been studied for over 40 years and is one of the premier questions in the asymptotic study of finitely presented groups. If a finitely presented one ended group $G$ is semistable at infinity then the fundamental group at infinity of $G$ can be defined unambiguously. The semistability of a finitely presented group $G$ implies that $H^2(G,\mathbb ZG)$ is free abelian. The question of whether or not all finitely presented groups $G$ are such that $H^2(G,\mathbb ZG)$ is free abelian is attributed to H. Hopf and remains unanswered. The class of finitely presented groups that are semistable contains many classes of important groups including: Word hyperbolic groups (combining work of Bestvina-Mess 
\cite{BM91}, Bowditch \cite{Bow99B}, 
G. Levitt \cite{Lev98} and G. Swarup \cite{Swarup}), CAT(0) cube groups S. Shepherd \cite{SSh}, 
1-relator groups \cite {MT92}, 
general Coxeter and Artin groups  \cite{MM96}, 
most solvable groups \cite{M4} and various group extensions and ascending HNN extensions \cite{M7}. In 1986  \cite {M4} 
we defined semistability for finitely generated groups as an aid in showing certain finitely presented groups are semistable. While ascending HNN extensions of finitely presented groups are semistable, the class of finitely presented groups that are ascending HNN-extensions of finitely generated (but not finitely presented) groups may contain non-semistable finitely presented groups. If a finitely generated group is semistable then any finitely presented ascending HNN extension of that group will be semistable \cite{M4}. 
Hence it is important to find non-semistable finitely generated groups that are base groups for finitely presented ascending HNN-extensions (see \cite {M7} 
for a current analysis). Our main result is that the Lamplighter group $L$ is not semistable. Unfortunately, interesting ascending HNN-extensions of this group are not only semistable, but simply connected at infinity. Section \ref{BD} of the paper contains basic definitions and establishes the types of spaces we work in. We prove our main theorem in Section \ref{MT} - The Lamplighter Group is not semistable at infinity. Other potentially non-semistable finitely generated groups suggested by D. Osin are mentioned in Section \ref{O}. We describe the Extended Lamplighter Group in Section \ref{E}. This is a finitely presented ascending HNN extension of the Lamplighter group. We observe that not only is it semistable, but in fact simply connected at infinity. 

\section {Basic Definitions} \label{BD} 
All groups and spaces here are 1-ended and all spaces are locally finite CW-complexes, so we only give the definitions of interest for such groups and spaces. A proper {\it edge path ray} in a space $X$ is a proper map $r:[0,\infty)\to X$ such that for each integer $n\geq 0$, $r|_{[n,n+1]}$ is a linear homeomorphism to an edge of $X$. A connected 1-ended locally finite CW complex $X$ is {\it semistable at infinity} if any two proper edge path rays in $X$ are properly homotopic. Equivalently, given any proper edge path ray $r$ in $X$ and finite subcomplex $C\subset X$ there is a finite complex $D\subset X$ such that for any third finite complex $E\subset X$ and edge path loop $\alpha$ based on $r$ and with image in $X-D$, the loop $\alpha$ is homotopic rel $r$ to a loop in $X-E$ by a homotopy in $X-C$ (so ``far out" loops on $r$ can be pushed arbitrarily far out along $r$ by far out homotopies). A  finitely presented 1-ended group $G$ is {\it semistable at infinity} if for some (any) finite complex $Y$ with $\pi_1(Y)=G$, the universal cover of $Y$ is semistable at infinity. 

Suppose $G$ is a group with finite generating set $S$. Let $\phi:F(S)\to G$ be the quotient map sending $s$ to $s$ for all $s\in S$. For $A\subset F(S)$, let $N(A)$ be the normal closure of $A$ in $F(S)$. Let $\Gamma(G, S)$ be the Cayley graph of $G$ with respect to $S$. If $\mathcal R=\{r_1,\ldots, r_m\}\subset ker(\phi)$ is a finite set of $S$-relations for $G$, then let $\Gamma(G, S,\mathcal R)$ be obtained from $\Gamma(G,S)$ by attaching a 2-cell at each vertex of $\Gamma(G, S)$ according to the labeling of each $r\in \mathcal R$. So there are $m$ 2-cells attached to $\Gamma(G, S)$ at each vertex of $\Gamma(G, S)$. Observe that if $r\in N(\mathcal R)\subset F(S)$ then (in $F(S)$), $r$ is a product of conjugates of elements of $\mathcal R^{\pm 1}$.
This implies that any edge path loop in $\Gamma(G,S,\mathcal R)$ with edge labeling equal to $r$ is homotopically trivial in $\Gamma(G,S,\mathcal R)$. We now show the converse of this statement.

Suppose the group $G_1$ has finite presentation $\langle S:\mathcal R\rangle$. If $\phi_1:F(S)\to G_1$ is the quotient map sending $s$ to $s$ for all $s\in S$, then 
$$N(\mathcal R)=ker (\phi_1)\subset ker(\phi),$$ $\phi_1(ker(\phi))$ is a normal subgroup of $G_1$ and $G_1/\phi_1(ker \phi)=G$. Let $\Gamma_1$ be the Cayley 2-complex for $\langle S:\mathcal R\rangle $ (and hence the universal cover of the finite quotient complex $G_1\backslash \Gamma_1$). The group of covering transformations of $\Gamma_1$ is $G_1$. The space $\Gamma(G,S, \mathcal R)$ is the quotient of $\Gamma_1$ by the left action of $\phi_1(ker(\phi))$ (with $G$ as a group of covering transformations). We have covering maps:

$$\Gamma_1\buildrel {\backslash \phi_1(ker(\phi))}\over \longrightarrow \Gamma(G,S,\mathcal R)\buildrel \backslash G\over \longrightarrow G_1\backslash \Gamma_1.$$
We also have the commutative diagram:

\[
\xymatrix{
F(S) \ar[d]^{\phi_1} \ar[dr]^\phi & \\
G_1 \ar[r] & G.
 }
\]

\begin{lemma} \label{L1} 
An edge path loop $\alpha$ in $\Gamma(G,S,\mathcal R)$ (with labeling $r\in ker(\phi)$) is homotopically trivial in $\Gamma(G,S,\mathcal R)$ if and only if $r\in N(\mathcal R)(\subset F(S))$.
\end{lemma}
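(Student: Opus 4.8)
The plan is to exploit the tower of covering maps recorded just above the statement together with the fact that $\Gamma_1$ is simply connected. Since $\Gamma_1$ is the Cayley $2$-complex of the finite presentation $\langle S:\mathcal R\rangle$, it is the universal cover of the presentation complex $G_1\backslash\Gamma_1$ and hence simply connected. Consequently the covering projection $p:\Gamma_1\to\Gamma(G,S,\mathcal R)$ obtained by dividing out the left action of $\phi_1(ker(\phi))$ is in fact the universal covering of $\Gamma(G,S,\mathcal R)$. This is the structural input that makes the whole argument work; the forward implication (if $r\in N(\mathcal R)$ then $\alpha$ is trivial) has already been observed, and the covering-space argument I describe recovers it and the converse simultaneously, giving a self-contained proof of the equivalence.

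The main step is to apply the standard lifting criterion for the universal cover. First I would reduce to the case where $\alpha$ is based at the identity vertex $e$: the group $G$ acts on $\Gamma(G,S,\mathcal R)$ as covering transformations, this action is transitive on vertices and preserves edge labels, and covering transformations are homeomorphisms, so translating $\alpha$ to a loop based at $e$ changes neither the triviality of its homotopy class nor its label $r$. With $\alpha$ based at $e$, the criterion says that $\alpha$ is null-homotopic in $\Gamma(G,S,\mathcal R)$ if and only if its lift $\tilde\alpha$ starting at the base vertex $\tilde e$ of $\Gamma_1$ is again a closed loop; this uses the freeness of the deck action of $\phi_1(ker(\phi))$ on the fiber over $e$.

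It then remains to identify the terminal vertex of $\tilde\alpha$. Because $\Gamma_1$ is the Cayley graph of $G_1$ (with $2$-cells attached), the edge path beginning at $\tilde e$ and reading off the word $r\in F(S)$ terminates at the vertex $\phi_1(r)\in G_1$; this is the usual correspondence between words in the generators and endpoints of edge paths in a Cayley graph, and free reduction of $r$ merely introduces backtracking, which does not move the endpoint. Hence $\tilde\alpha$ closes up exactly when $\phi_1(r)=1_{G_1}$, that is, when $r\in ker(\phi_1)=N(\mathcal R)$, which is precisely the asserted equivalence.

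I would expect no serious obstacle once the universal-cover picture is in place; the only points needing care are verifying that $\Gamma_1$ is simply connected, so that $p$ is genuinely the universal cover rather than merely some cover, and being precise that null-homotopy of $\alpha$ is detected by the closure of a single lifted edge path. Everything past that is the routine Cayley-graph bookkeeping of tracing an edge path to its endpoint.
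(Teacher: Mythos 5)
Your proof is correct and follows essentially the same route as the paper: both arguments hinge on lifting $\alpha$ through the covering $\Gamma_1\to\Gamma(G,S,\mathcal R)$ and reading off the endpoint of the lifted path in the Cayley complex of $G_1$. The only cosmetic difference is that you derive both implications at once from the universal-cover lifting criterion (using that $\Gamma_1$ is simply connected), whereas the paper handles the easy direction separately by writing $r$ as a product of conjugates of relators and uses homotopy lifting only for the converse.
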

\begin{proof}
We have already established one part of the lemma. If $\alpha$ is homotopically trivial in $\Gamma(G,S,\mathcal R)$ then the homotopy lifting theorem implies that $\alpha$ lifts to a (homotopically trivial) loop in $\Gamma_1$ (with the same labeling). Any edge path loop in $\Gamma_1$ has edge labeling in $N(\mathcal R)$.
\end{proof}

Observe that if $r$ is in the normal closure of $\mathcal R$ (so that $r$ is a product of conjugates of elements in $\mathcal R^{\pm 1}$) then it is elementary to form a van Kampen diagram for $r$ with 2-cells labeled by elements of $\mathcal R^{\pm 1}$ (simply draw an edge path loop in the plane with this labeling as a product of conjugates of elements of $\mathcal R^{\pm 1}$).

The finitely generated group $G$ is {\it semistable at } $\infty$ if there is a finite generating $S$ for $G$ and a finite set $\mathcal R\subset ker(\phi)$ of $S$-relations for $G$ such that $\Gamma(G, S,\mathcal R)$ is semistable at $\infty$. This definition is independent of finite generating set. If $\mathcal R\subset \mathcal R'\subset ker(\phi)$ and $\Gamma(G, S, \mathcal R)$ is semistable at $\infty$, then $\Gamma(G, S, \mathcal R')$ is semistable at $\infty$ as well (see \cite{M4}). 

\section{Non-Semistability of the Lamplighter Group}\label{MT}

The {\it Lamplighter Group} has presentation 
$$L=\langle a,x: a^2=1, [a, x^{-k}ax^k]=1 \hbox{ for all integers } k\rangle.$$

Since $a$ has order 2, $x^{-k}ax^k$ has order 2 as well. Hence the commutation relation $[a, x^{-k}ax^k]$ can be written as $a(x^{-k}ax^k)a(x^{-k}ax^k)$. Conjugating this relation by $x^{-k}$ gives the relation $[a, x^{k}ax^{-k}]$ so that for our purposes we need not include both $[a, x^{-k}ax^k]$ and $[a, x^{k}ax^{-k}]$ in a set of relations $\mathcal R$. 

\vspace {.5in}
\vbox to 2in{\vspace {-2in} \hspace {-.3in}
\hspace{-.5 in}
\includegraphics[scale=1]{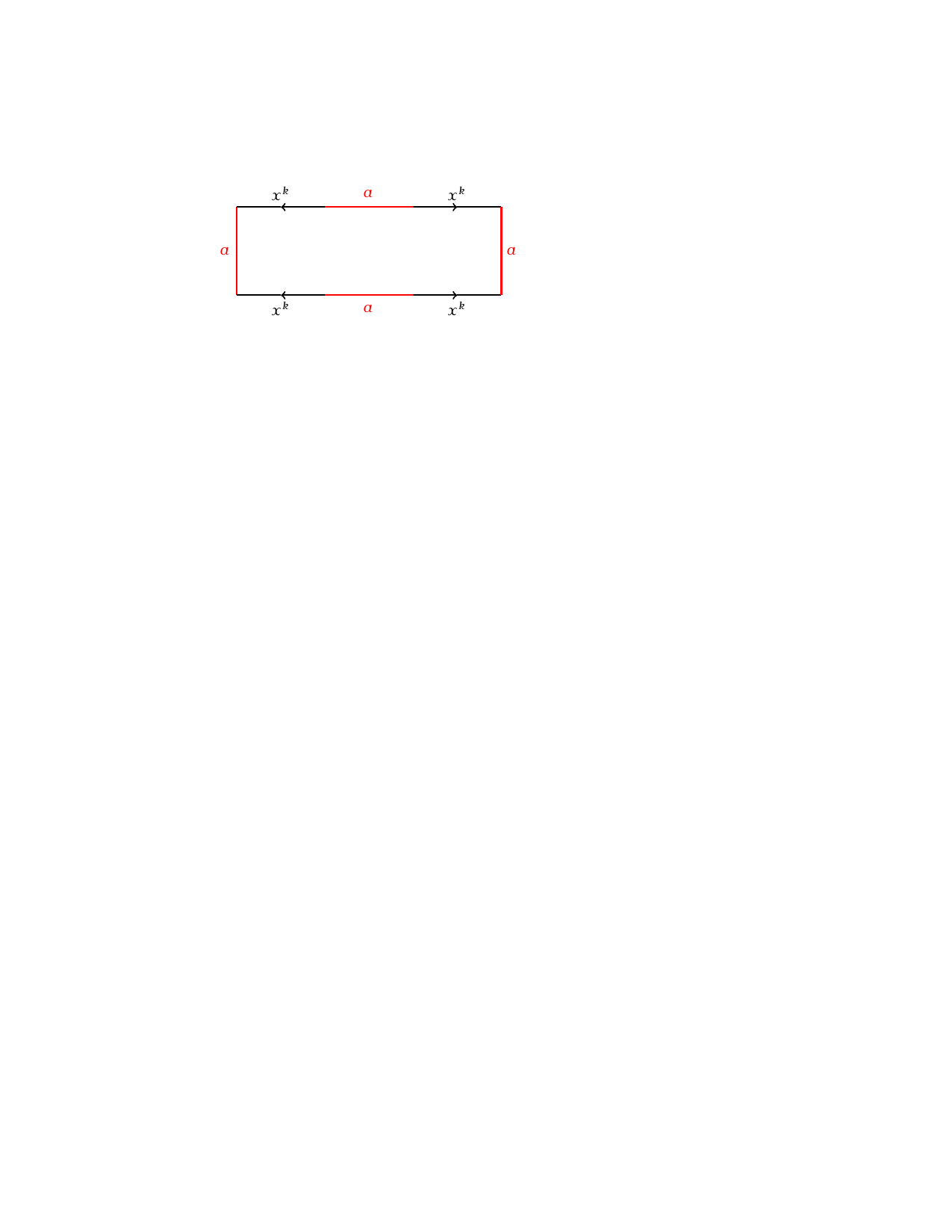}
\vss }

\vspace{-1in}

\centerline{Figure 1}

Any finite set of relations is a subset of those determined by $\mathcal R_k=\{a^2, [a, x^{-1}ax], \ldots, [a, x^{-k}ax^k]\}$ for some integer $k>0$. 
Hence we need only show that $\Gamma_k=\Gamma(L, \{a, x\}, \mathcal R_k)$ is not semistable at $\infty$ for all $k>0$. 
The boundary of each cell of $\Gamma_k$ arising from a commutation relation, has exactly four edges labeled $a$. Each $a$-edge is diametrically opposite to an edge labeled $a$ (see Figure 1).

Suppose an edge path loop $\alpha$ in $\Gamma_k$ is homotopically trivial in $\Gamma_k$. Then there is a van Kampen diagram $\mathcal D$ for $\alpha$ whose cells are bounded by edge path loops with labels in $\mathcal R_{k}$. Define an $a$-{\it band} in $\mathcal D$ as follows. If $e_1$ is an edge in the boundary of $\mathcal D$ with label $a$  then there is exactly one 2-cell $C_1$, containing $e_1$. If $e_2$ is  opposite $e_1$ in $C_1$ then the label of $e_2$ is also $a$. If $e_2$ is not on the boundary of $\mathcal D$, there is a unique cell $C_2\ne C_1$ containing $e_2$ with opposite edge labeled $a$. String cells together in this way until $e_i$ is also an edge of the boundary of $\mathcal D$. This collection of cells is a {\it band}. The edges $e_i$ are called {\it connecting} edges of the band. 

If $e_1$ is in the interior of $\mathcal D$, then there are exactly two  2-cells containing $e_1$. Suppose $e_1$ is not a connecting edge of any band where the band begins and ends on the boundary of $\mathcal D$. Say $C_1$ is one of the two cells containing $e_1$. Let $e_2$ be the edge of $C_1$ opposite $e_1$ as above, let $C_2\ne C_1$ be the other cell of $\mathcal D$ containing $e_2$. Stringing cells together, eventually $e_i=e_1$ for some $i>1$. This loop of cells is also called a band and the $e_i$ are connecting edges of the band. 
Every $a$-edge of $\mathcal D$ is a connecting edge of a unique band.

\begin{lemma}\label{nocross} 
A band cannot cross itself. In other words, if $e_1$ and $e_2$ are opposite edges labeled $a$ in a cell, and $d_1$ and $d_2$ the other $a$-edges of that cell, then the band for $e_1$ and $e_2$ does not contain $d_1$ or $d_2$ as connecting edges. 
\end{lemma}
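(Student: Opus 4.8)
The plan is to introduce a \emph{level} function on the vertices of the van Kampen diagram $\mathcal D$ and to observe that it is constant along each band. First I would define the homomorphism $\psi:L\to\mathbb Z$ determined by $\psi(a)=0$ and $\psi(x)=1$. Since every relator in $\mathcal R_k$ is either $a^2$ or a commutator, each relator has total $x$-exponent $0$; hence reading off signed $x$-exponents along edge paths from the base vertex of $\mathcal D$ assigns to each vertex $v$ a well-defined integer $\psi(v)$, independent of the chosen path, which I call the \emph{level} of $v$. Because $\psi(a)=0$, the two endpoints of any $a$-edge have the same level, so every $a$-edge inherits a well-defined level.

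Next I would compute the levels of the four $a$-edges of a single commutation cell for the relator $[a,x^{-j}ax^j]$ with $j\ge 1$. Reading its boundary word $a\,x^{-j}a\,x^{j}\,a\,x^{-j}a\,x^{j}$ and tracking the level from a starting value $c$, one finds that the first and third $a$-edges sit at level $c$ while the second and fourth sit at level $c-j$. These are precisely the two diametrically opposite pairs: opposite $a$-edges share a level, and the two pairs occupy the distinct levels $c$ and $c-j$, distinct exactly because $j\ge 1$.

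The key step is then the following band invariant. When a band passes through a cell it enters along an $a$-edge and leaves along the diametrically opposite $a$-edge; by the previous paragraph these two connecting edges lie at the same level. Proceeding cell by cell, a straightforward induction shows that all connecting edges of a single band lie at one common level. Now suppose $e_1,e_2$ are opposite $a$-edges of a cell, at level $c$, and $d_1,d_2$ are the other two $a$-edges, at level $c-j$. Every connecting edge of the band through $e_1$ and $e_2$ lies at level $c$, whereas $d_1$ and $d_2$ lie at level $c-j\ne c$; therefore $d_1$ and $d_2$ cannot be connecting edges of that band, which is exactly the assertion of the lemma.

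The only delicate point is the level computation on the commutation cell: one must read the boundary word correctly and confirm that diametrically opposite $a$-edges share a level while the two pairs differ by $j$. Once this is in hand, the well-definedness of $\psi$ on $\mathcal D$ (guaranteed by the relators having zero $x$-exponent) together with constancy of the level along bands makes the conclusion immediate. The hypothesis $j\ge 1$ is essential: it is what forces the two pairs onto different levels, and the degenerate relator $a^2$ contributes only a single opposite pair, so there the statement is vacuous.
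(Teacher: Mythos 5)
Your proof is correct and is essentially the paper's argument: the paper likewise uses the homomorphism $L\to\mathbb Z$ that kills $a$, comparing the $x$-exponent sum $0$ of the path along the band with the exponent sum $-k$ of the path across the cell. Packaging this as a globally defined level function on the diagram that is constant along bands is just a cleaner formulation of the same contradiction.
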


\begin{proof}
Say there is a collection of cells $C_i$, $i\in\{1,\ldots, n\}$ such that $e_i$ and $e_{i+1}$ are opposite edges in $C_i$ and $e_{n+1}$ is an $a$-edge of $C_1$ (not equal to $e_1)$ or $e_2$. 

The edge path on the boundary of the band from  $q$, the initial point of $e_2$ to $y$, the initial point of $e_{n+1}$ (marked $\alpha$ in Figure 2) has 0-exponent sum on the $x$-letters. But there is also an edge path between $q$ and $y$ in $C_1$ with labeling $(a=e_2, x^{-k}, a=e_{n+1})$  and $x$-exponent sum $-k$.

The group elements of $L$ represented by the labelings of these two edge paths are the same, but that is impossible as the homomorphism from $L$ to $\mathbb Z$ that quotients out the normal closure of $a$ maps one of these these elements to $0$ and the other to $-k$. 
\end{proof}

\vspace {.5in}
\vbox to 2in{\vspace {-2in} \hspace {-.3in}
\hspace{-.5 in}
\includegraphics[scale=1]{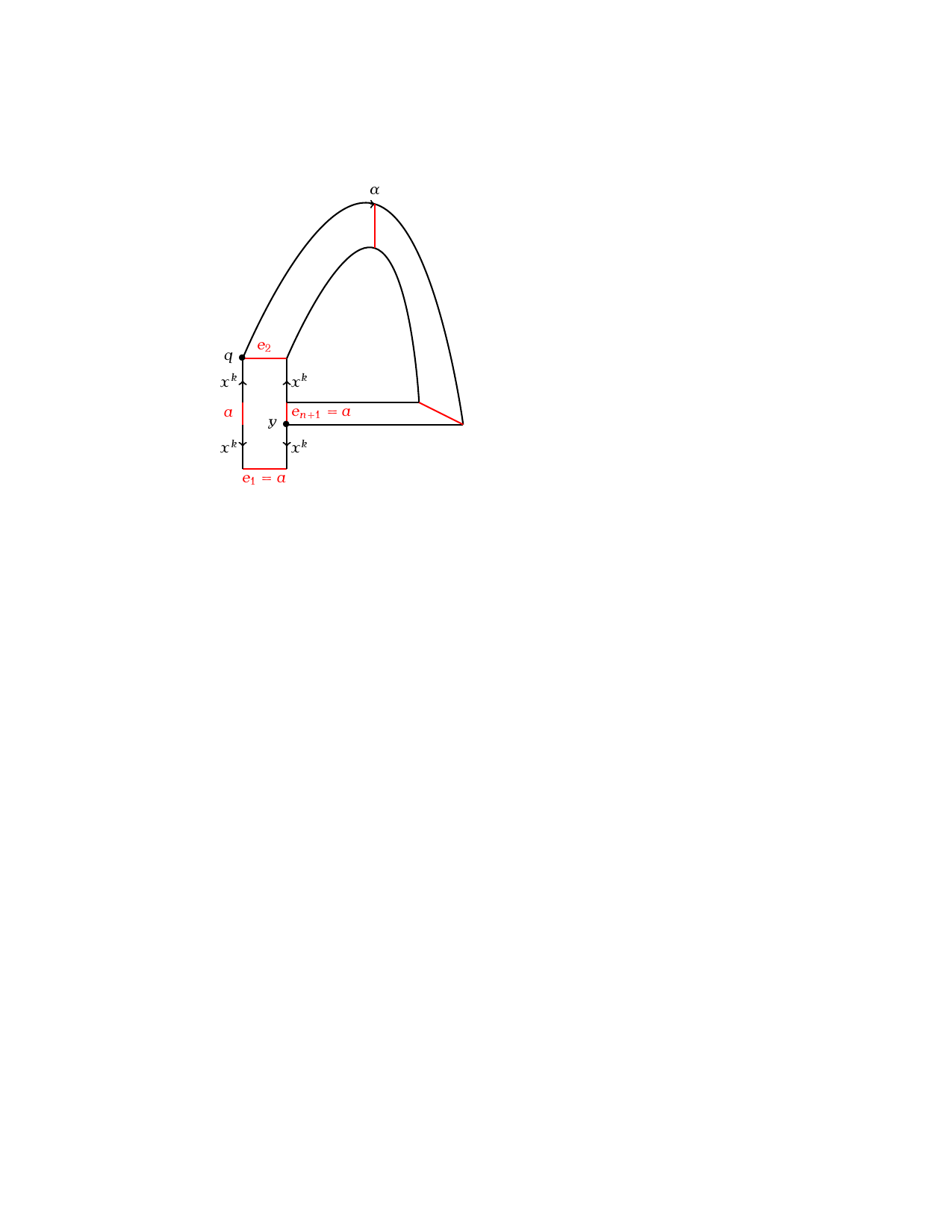}
\vss }

\vspace{.4in}

\centerline{Figure 2}

\begin{remark}\label{smaller} 
It is clear that a band that begins on the boundary of a diagram ends on the boundary of the diagram and if a band does not have a boundary edge,  then it forms an annulus. Notice that if $e_1$ and $e_2$ are $a$-edges that are opposite one another in a cell $C$, then the two sides of $C$ (between $e_1$ and $e_2$) have the same labeling $(x^{-k}ax^k)$. Hence if a band forms an annulus in a diagram, then the cells of the band can be removed to form a smaller diagram for the same boundary curve. 
\end{remark}

Suppose $Z$ is a locally finite CW complex. If $Q$ is a subcomplex of $Z$ we define $st(Q)$ to be the subcomplex of $Z$ with vertex set equal to all vertices of $Q$ union all vertices of $Z$ that are connected to a vertex of $Q$ by an edge. A cell of $Z$ belongs to $st(Q)$ if all of its vertices belongs to $st(Q)$. Inductively, $st^k(Q)=st(st^{k-1}(Q))$ for all $k\geq 2$. Any compact subset of $Z$ belongs to a finite subcomplex of $Z$ and if $A$ is a finite subcomplex of $Z$ then $st(A)$ is a finite subcomplex of $Z$. 

\begin{theorem}\label{main} 
The Lamplighter group is not semistable at $\infty$.
\end{theorem}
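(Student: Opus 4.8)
The plan is to show that $\Gamma_k = \Gamma(L, \{a,x\}, \mathcal R_k)$ fails the semistability criterion from Section \ref{BD} for every $k > 0$. I would work with a specific proper edge path ray $r$ and use the $a$-band machinery (Lemma \ref{nocross} and Remark \ref{smaller}) to obstruct the ``pushing loops out along $r$'' condition. Let me sketch the geometry first.

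The group $L$ fibers over $\mathbb Z$ via the homomorphism $\psi: L \to \mathbb Z$ killing $a$ (sending $x \mapsto 1$); geometrically, $\Gamma_k$ has an ``$x$-coordinate'' given by $\psi$, and the $x$-edges move between fibers while the $a$-edges stay within a fiber. The natural choice of ray is $r$ tracing the positive $x$-axis: $r$ runs through the vertices $1, x, x^2, \dots$ along $x$-edges. The key structural fact I expect to exploit is that the lamp generators $x^{-k}ax^k$ commute, and the relators in $\mathcal R_k$ only ever relate lamps at $x$-distance at most $k$; so to ``turn on'' a lamp far out (say at position $N$) starting from a loop based near position $N$, any homotopy pushing that loop further out along $r$ must drag $a$-bands across the ray, and these bands carry topological information that cannot be cancelled.

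I would set up the contradiction as follows. Consider, for large $N$, an edge path loop $\alpha_N$ based on $r$ at the vertex $x^N$ that reads, roughly, $x^{-N} a x^N \cdot a$ conjugated appropriately to be based on $r$ — that is, a loop that goes in along $r$ to the origin, toggles the lamp at $0$ using $a$, comes back out, and toggles the lamp at $N$; the precise loop should be chosen to be homotopically nontrivial ``at infinity'' relative to $r$ yet trivial in $\Gamma_k$ so that a van Kampen diagram exists. Given a finite complex $C$ (say the star-neighborhood of the ball of radius $m$), suppose for contradiction that some $D$ worked. Then for any $E$, a loop $\alpha$ outside $D$ could be homotoped rel $r$ out past $E$ staying outside $C$. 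I would take the homotopy, which is a map of a disk (or annulus rel $r$), triangulate it, and read off the van Kampen-type diagram $\mathcal D$. The $a$-band analysis applies: each $a$-edge lies on a unique band, bands cannot cross themselves (Lemma \ref{nocross}), and annular bands can be deleted (Remark \ref{smaller}), so I may assume $\mathcal D$ has no annular bands and hence every $a$-band runs from boundary to boundary. The number of $a$-edges on the boundary of $\mathcal D$ and their pairing by bands is then a homotopy invariant that I can compute from the original loop; because the lamp at $0$ is lit and must be connected by a band to some $a$-edge on the final loop (which lies outside $E$, hence far out), the band must cross the region near $C$ — giving a cell of the homotopy inside the forbidden region $C$, the contradiction.

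The main obstacle, and where I would spend the most care, is making the ``band must cross $C$'' step rigorous: I need to track the $x$-coordinate (the value of $\psi$) along an $a$-band and show it is essentially constant, so that a band connecting a boundary $a$-edge at $x$-position $\approx 0$ to one at $x$-position $\approx N$ is impossible \emph{within a single fiber}, forcing the two lit lamps to be joined only through boundary edges — which then constrains where the boundary of $\mathcal D$ can sit and pins a cell inside $C$. Concretely, an $a$-band consists of cells whose connecting $a$-edges all lie in the same $\psi$-fiber (the commutation cells have their two $a$-edges at equal $x$-coordinate), so $\psi$ is constant along each band; this is exactly the invariant powering Lemma \ref{nocross}. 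The delicate bookkeeping is to combine this fiber-constancy with the proper-ray condition to locate a cell of the pushing homotopy provably inside the fixed neighborhood $C$, contradicting that the homotopy lives in $X - C$. Once that localization is set up, the non-semistability follows because $C$ can be taken arbitrarily large while $D$, $E$ vary, defeating the semistability quantifiers.
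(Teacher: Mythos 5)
Your machinery is the right one --- the ray along the positive $x$-axis, $a$-bands in a van Kampen diagram, and the constancy of the $x$-exponent sum $\psi$ along a band are all exactly what the paper uses --- but your test loop has a fatal flaw. You propose a loop based at $x^N$ that travels in to the origin, toggles the lamp at $0$, and returns to toggle the lamp at $N$. The semistability criterion requires the test loop $\alpha$ to have image in $\Gamma - D$, where $D$ is chosen by the (hypothetical) semistability of $\Gamma$ \emph{after} $C$ is fixed; since every loop in your family passes through the origin, taking $D$ to contain a neighborhood of the origin leaves you with no admissible test loop at all, and you get no contradiction. Relatedly, your closing mechanism --- forcing a cell of the homotopy into $C$ --- is not what closes the argument and you rightly flag it as the delicate point; it does not obviously work even for a corrected loop.

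The paper's key idea, which your sketch is missing, is that the test loop must be of \emph{bounded length} and live entirely far out on $r$: having reduced to the relator set $\mathcal R_{n-1}=\{a^2,[a,x^{-1}ax],\ldots,[a,x^{-(n-1)}ax^{n-1}]\}$, one takes $\alpha$ to be the commutator loop $(a,x^{-n},a,x^n,a,x^{-n},a,x^n)$ based at a vertex $v$ on $r$ far from $D$ --- two lamps at mutual distance $n$, just beyond the range of the relators, both far from the origin. Since each cell's $a$-band sides are labeled by $x^{-j}ax^j$ with $j\le n-1$, the connecting edges of a band starting on $\alpha$ stay in a coset of the finite subgroup $\langle a,x^{-1}ax,\ldots,x^{-(n-1)}ax^{n-1}\rangle$, hence in a bounded neighborhood of $v$; so no band from $\alpha$ can reach the pushed-out loop $\beta$, and the four $a$-edges of $\alpha$ must be paired among themselves. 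Exponent sums force the pairing $1\leftrightarrow 3$ and $2\leftrightarrow 4$, so the two bands must cross, and at the crossing cell one obtains two paths to the same vertex with different $x$-exponent sums --- the contradiction. Note this shows $\alpha$ cannot be pushed past $A$ by \emph{any} homotopy rel $r$, so the subcomplex $C$ plays no role; your plan of localizing a cell of the homotopy inside $C$ is both harder and unnecessary.
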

\begin{proof} 
Any finite set of relators for $L$ is a consequence of the set of relators $\mathcal R_{n-1}=\{a^2, (ax^{-1}ax)^2, \ldots, (ax^{n-1}ax^{-(n-1)})^2\}$ for some integer $n>0$. Suppose $\Gamma=\Gamma(T, \{x,a\}, \mathcal R_{n-1})$ is semistable at $\infty$. 
Let $\ast$ be the identity vertex of $\Gamma$. Recall the vertices of $\Gamma$ are the elements of $L$. Let $K$ be an integer such that the finite subgroup 
$$\langle a, x^{-1}ax,\ldots, x^{-(n-1)}ax^{n-1}\rangle\subset st^K(\ast).$$
Let $r$ be the edge path ray at $\ast$ each of whose edges is labeled by $x$. Let $C$ be a finite subcomplex of $\Gamma$ and $D$ be a finite subcomplex so that for any finite complex $A$ and any loop $\alpha$ on $r$ with image in $\Gamma-D$, $\alpha$ is homotopic $rel\{r\}$ to a loop in $\Gamma-A$ by a homotopy in $\Gamma -C$. Let $v$ be a vertex of $r$ such that the tail of $r$ at $v$ avoids $D$ and $v$ is not in $st^{4n+4}(D)$.  Consider the loop $\alpha$ at $v$ with edge path label $(a, x^{-n}, a, x^n, a, x^{-n}, a, x^n)$, then since $|\alpha|=4n+4$, $\alpha$ avoids $D$. 

Let $A=st^N(\ast)$, where $N$ is large enough so that
 $$st^{4n+4}(D)\cup st^{K+2n+1}(v)\subset A$$ 
We show that $\alpha$ is not homotopic $rel\{r\}$ to a loop in $\Gamma-A$ (let alone by a homotopy in $\Gamma-C$). This will be the desired contradiction to the semistabilty assumption on $\Gamma$. 

Suppose otherwise and $\alpha$ is homotopic $rel\{r\}$ to the loop $\beta$ in $\Gamma-A$. This means that $\alpha$ is homotopic to the path with labeling $(x^k, \beta, x^{-k})$ (for some large integer $k>0$).  By Lemma \ref{L1}, there is a van Kampen diagram $\mathcal D$ for the edge path loop $\langle \alpha, x^k, \beta^{-1},x^{-k}\rangle$ such that each 2-cell of $\mathcal D$ is labeled by an element of $\mathcal R_{n-1}^{\pm 1}$. 
There are four edges labeled $a$ on $\alpha$. Say the vertices of the first are $v,w_1$, the second are $v_2,w_2$ etc (see Figure 3). Consider the $a$-band $B_1$ that starts on the first $a$-edge of $\alpha$. We identify bands in $\mathcal D$ with the corresponding bands in $\Gamma$. The sides of $B_1$ are labeled by elements of $\{x^{-1}ax,\ldots, x^{-(n-1)}ax^{n-1}\}$, and so each connecting $a$-edge of the band is in $st^K(v)$.  
This implies that each vertex of  $B_1$ is in $st^{K+n}(v)\subset A$. Hence $B_1$ cannot end on $\beta$.
Instead $B_1$ must end at one of the $a$-edges of $\alpha$. 
The band $B_1$ cannot end at the second $a$-edge of $\alpha$ since otherwise the edge path from $w_1$ to $v_2$ along $\alpha$ has $x$-exponent sum equal to $-n$ and the path from $w_1$ to $v_2$ along $B_1$ has $x$-exponent sum equal to $0$ (see Figure 3). Similarly $B_1$ cannot end at the fourth $a$ edge of $\alpha$. Instead, it ends at the third $a$-edge of $\alpha$. Next consider the band $B_2$ at the second $a$-edge of $\alpha$. Each connecting $a$-edge of $B_2$ is in $st^K(v_2)$ and so each vertex of $B_2$ is in $st^{K+n}(v_2)$. This implies $B_2$ is in $st^{K+2n+1}(v)$. Hence $B_2$ cannot end on $\beta$. Then $B_2$ ends on the fourth $a$-edge of $\alpha$. (See Figure 3) The band $B_1$ crosses $B_2$ at an $a$-edge with initial vertex $y$. The edge path along $B_1$ that starts at $w_1$ and ends at $y$ has $x$-exponent sum equal to $-m$ for $m<n$. The edge path beginning at $w_1$ following $\alpha$ by: $x^{-n}$ (to $v_2$) and then following $B_2$ from $v_2$ to $y$ has $x$-exponent sum equal to $-n$. This defines two words representing the same element in $L$ with different $x$-exponent sums, which is impossible.
\end{proof}

\vspace {.5in}
\vbox to 2in{\vspace {-2in} \hspace {-.3in}
\hspace{-1 in}
\includegraphics[scale=1]{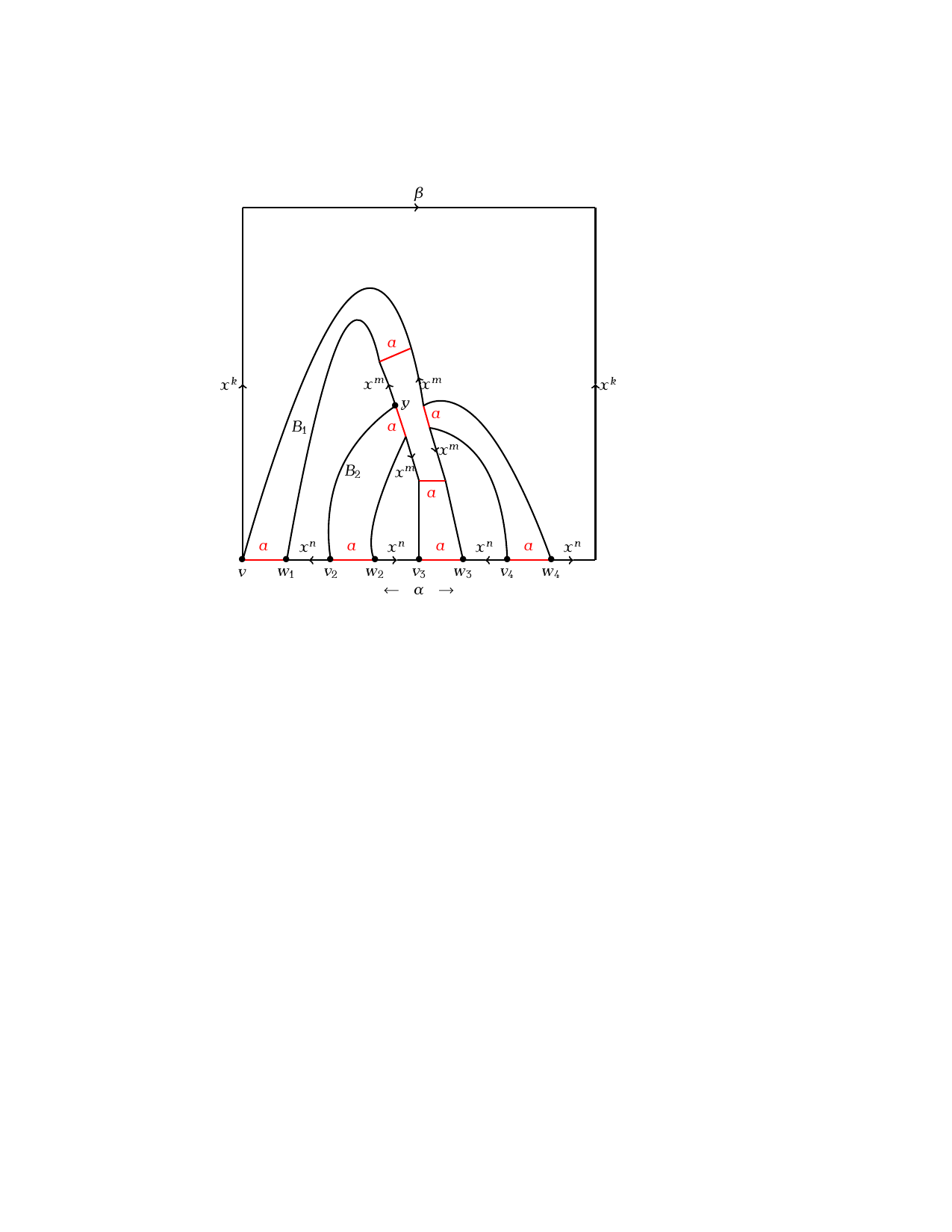}
\vss }

\vspace{1.4in}

\centerline{Figure 3}

\medskip

\section{Potential Generalizations}\label{O}

Denis Osin suggests that the ideas behind our proof may extended to a class of groups which contains all finitely generated groups $G$ that fit into a exact sequence:
$$1\to K\to G\to F\to 1$$ 
where $K$ is a locally finite infinite group and $F$ is an infinite finitely generated free group as well as finitely generated not finitely presented limits of virtually free groups. 

For this last class of groups there is a sequence of epimorphisms: 
$$F_0{\buildrel q_1\over  \twoheadrightarrow} F_1 {\buildrel q_2 \over \twoheadrightarrow}\cdots $$
where each $F_i$ is a finitely presented infinite virtually free group. If $K_i$ is the kernel of $q_i$ and $K^i$ is the preimage of $K_i$ in $F_0$ then $K^1<K^2<\ldots$ is an ascending sequence of normal subgroups in $F_0$. Let $K=\cup_{i=1}^\infty K^i$ then the limit of the sequence of epimorphisms is  the group $G=F_0/K$. 

\section{The Extended Lamplighter Group}\label{E}

Recall that the Lamplighter group $L$ has presentation 
$$\langle x, a: a^2=1, [a, x^{-k}ax^k]=1 \hbox{ for all integers } k\rangle .$$ 
There is a monomorphism $L\to L$ taking $x\to x$ and $a\to x^{-1}axa$. The resulting ascending HNN extension $E$ is called the Extended Lamplighter group and has finite presentation:
$$E=\langle x,a,t: a^2=1, [x,t]=1, t^{-1}at=x^{-1}axa\rangle.$$
There is a short exact sequence:
$$1\to N\to E\to \mathbb Z\times \mathbb Z\to 1$$ 
where $N$, the normal closure of $a$ in $E$, is an infinite direct sum of copies of $\mathbb Z_2$). If $u,v\in E$, then $uvu^{-1}v^{-1}\in N$ and so $(uvu^{-1}v^{-1})^2=1$. In particular, $E$ contains no free group of rank 2.
A result of R. Bieri, W. Neumann and R. Strebel [Theorem D, \cite{BNS87}] implies that $E$ contains an infinite finitely generated normal subgroup $K$ such that $E/K$ is isomorphic to $\mathbb Z$. Combining this result with the main theorem of \cite{M1} implies that $E$ is semistable at infinity. But more can be said about $E$. 
In his PhD thesis, N. Silkin 
\cite{Silk1} 
proved that $E$ is simply connected at $\infty$, a property stronger than semistability. 

\bibliographystyle{amsalpha}

\bibliography{paper}{}

\end{document}